\newcommand{\R}{{\mathbb R}}
\newcommand{\C}{{\mathbb C}}
\newcommand{\bx}{\mbox{\boldmath{$x$}}}
\newcommand{\be}{\mbox{\boldmath{$e$}}}
\newcommand{\bg}{\mbox{\boldmath{$g$}}}
\newcommand{\bl}{\begin{list}{ \ }{
\leftmargin=.325in}}
\newcommand{\el}{\end{list}}
\newcommand{\conj}[1]{\mbox{$\overline{#1}$}}
\newcommand{\inner}[1]{\mbox{$\left\langle #1 \right\rangle$}}
\newcommand{\norm}[1]{\mbox{$\left\Vert#1\right\Vert$}}
\newcommand{\abs}[1]{\mbox{$\vert#1\vert$}}
\newcommand{\bignorm}[1]{\mbox{$\left\Vert#1\right\Vert$}}
\newcommand{\bigabs}[1]{\mbox{$\left\vert#1\right\vert$}}
\definecolor{SPECgreen}{rgb}{0,0.75,0}%
\definecolor{SPECorange}{rgb}{1.0,.5625,0}%
\definecolor{SPECred}{rgb}{0.75,0,0}
\definecolor{SPECblue}{rgb}{0,0,0.75}
\definecolor{SPECblack}{rgb}{0.25,0.25,0.25}
\definecolor{SPECgray}{rgb}{0.625,0.625,0.625}
\definecolor{citecol}{rgb}{0.75,0,0}
\definecolor{linkcol}{rgb}{0,0,0.75}
\definecolor{SPECpink}{rgb}{1.0,0.1,0.6}
\definecolor{SPECorangeg}{rgb}{0.609375,.609375,0.609375}
\definecolor{SPECblueg}{rgb}{0.0525,0.0525,0.0525}
\definecolor{SPECredg}{rgb}{0.0525,0.0525,0.0525}
\pgfplotsset{compat=1.14}
\numberwithin{equation}{section}
\begin{document}

\date{\today}%
\title{Continuous Adaptive Cross Approximation for Ill-posed Problems with
   Chebfun }%
\author{%
  A. Alqahtani\thanks{Department of Mathematics, King Khalid University,
    P.O. Box 9004, Abha, Saudi Arabia and Department of Mathematical Sciences,
    Kent State University, Kent, OH 44242, USA. E-mail:
    \texttt{aalqah11@gmail.com}.}%
  \and%
  T. Mach\thanks{Department of Mathematical Sciences, Kent State University,
    Kent, OH 44242, USA. E-mail: \texttt{thomas.mach@gmail.com}.}%
  \and%
  L. Reichel\thanks{Department of Mathematical Sciences, Kent State University,
    Kent, OH 44242, USA. E-mail: \texttt{reichel@math.kent.edu}.}%
}%
\institute{}%
\maketitle

%%%%%%%%%%%%%%%%%%%%%%%%%%%%%%%%%%%%%%%%%%%%%%%%%%%%%%%%%%%%%%%%%%%%%%%%%%%%%%%%
% Abstract
%%%%%%%%%%%%%%%%%%%%%%%%%%%%%%%%%%%%%%%%%%%%%%%%%%%%%%%%%%%%%%%%%%%%%%%%%%%%%%%%
\begin{abstract}
The analysis of linear ill-posed problems often is carried out in function
spaces using tools from functional analysis. However, the numerical solution 
of these problems typically is computed by first discretizing the problem and
then applying tools from (finite-dimensional) linear algebra. The present paper 
explores the feasibility of applying the Chebfun package to solve ill-posed 
problems. This approach allows a user to work with functions instead of matrices.
The solution process therefore is much closer to the analysis of ill-posed
problems than standard linear algebra-based solution methods.
\end{abstract}

%%%%%%%%%%%%%%%%%%%%%%%%%%%%%%%%%%%%%%%%%%%%%%%%%%%%%%%%%%%%%%%%%%%%%%%%%%%%%%%%
% Keywords
%%%%%%%%%%%%%%%%%%%%%%%%%%%%%%%%%%%%%%%%%%%%%%%%%%%%%%%%%%%%%%%%%%%%%%%%%%%%%%%%
\begin{keywords}
  {ill-posed problem, inverse problem, Chebfun, truncated SVE, Tikhonov regularization}
\end{keywords}

%%%%%%%%%%%%%%%%%%%%%%%%%%%%%%%%%%%%%%%%%%%%%%%%%%%%%%%%%%%%%%%%%%%%%%%%%%%%%%%%
% AMS Subject Classification
%%%%%%%%%%%%%%%%%%%%%%%%%%%%%%%%%%%%%%%%%%%%%%%%%%%%%%%%%%%%%%%%%%%%%%%%%%%%%%%%
\subclass{
  % primary
  47A52, % Ill-posed problems, regularization (in General theory of linear
         % operators)
  % secondary
  65F22, % Ill-posed problems, regularization (in Numerical analysis)
  45B05, % Fredholm integral equations (in Integral Equations)
  41A10  % Approximation by polynomials (in Approximations and expansions)
  % 65F15, % Eigenvalues, eigenvectors
  % 65H17, % Eigenvalues, eigenvectors
  % 15A18, % Eigenvalues, singular values, and eigenvectors
  % 15A23, % Factorization of matrices
  % 15B10, % Orthogonal matrices
  % 93B60 % Systems and Control Theory -- Eigenvalue problems
}

\section{Introduction}\label{sec:intro}
We are interested in the solution of Fredholm integral equations of the first kind,
\begin{equation}\label{inteq} %
  \int_{\Omega_{1}} \kappa(s,t) x(t)\,dt = g(s),\qquad s\in\Omega_{2},
\end{equation} %
with a square integrable kernel $\kappa$.
\ The $\Omega_{i}$ are subsets of $\R^{d_{i}}$ for
$i=1,2$. Such integral equations are common in numerous applications including
remote sensing, computerized tomography, and image restoration.
%\comment{We have to make sure that these applications really fit what will be discussed below.}

Two major problems arise when solving \eqref{inteq}. The first problem is that
the space of functions is of infinite dimensionality. The second problem is that
small changes in $g$ may correspond to large changes in $x$ as exemplified by 
\begin{equation*}
	\begin{split} 
	\max_{s\in \Omega_2} \abs{\int_{\Omega_1} \kappa(s,t)\cos(2\pi m t)\,dt}, \qquad \Omega_1=\Omega_2=[0,1],
	\end{split}
\end{equation*}
where the maximum can be made tiny by choosing $m$ large, despite the maximum of
$\abs{\cos(2\pi m t)}$ being $1$. This is a consequence of the Riemann--Lebesgue
theorem; see, e.g., \cite{Eng2000,Han1998} or below for discussions of this result. The 
second problem 
is particularly relevant when the right-hand side $g$ is a measured quantity subject to 
observational errors, as is the case in many applications.

Usually one deals with the first problem by first discretizing the functions $x(t)$ and
$g(s)$ in \eqref{inteq} using $n$ piecewise constant, linear, or polynomial basis 
functions; see e.g.,~\cite{Hac95} or~\cite{PCH4}. The kernel $\kappa(s,t)$ is discretized
analogously. This transforms the problem into a system of
linear equations. The second problem causes the coefficient matrix of said
system to be ill-conditioned for sufficiently large $n$. Straightforward
solution of these linear systems of equations generally is not meaningful because of severe
error propagation. Therefore, this linear system has to be regularized. This can, for
instance, be achieved by Tikhonov regularization or truncated singular value
decomposition (TSVD). While the first dampens the influence of the small
singular values, the latter outright ignores them. One is then often faced with
a trade-off between a small discretization error and a small error caused by the
regularization; see, e.g., Natterer \cite{Na77}. In fact, often the more basis
functions are used for the discretization, the more ill-conditioned the resulting
coefficient matrix becomes, and the larger the need of regularization.

In this paper we will first regularize the problem and then discretize the 
problem. Regularization will be achieved through a singular value expansion 
of the kernel. 
At the same time the singular value expansion provides us with an excellent
basis for discretizing the problem. The discretized problem is then a diagonal
linear system of equations, which can be solved trivially. Thus, dealing with
the second problem first simplifies the other problem.

We will compute the singular value expansion of the kernel using
Chebfun~\cite{chebfun}. Hence, our discretization basis will consist of
piecewise Chebyshev polynomials. The computed solution is a Chebfun
approximation to the function $x(t)$. The advantage of Chebfun is that the
solution will feel and behave like a function. Therefore, our approach is
arguably closer to directly solving \eqref{inteq} instead of a discretized
version.

This paper is organized as follows. In the second section, we will provide basic
definitions, introduce our notation, and briefly discuss Chebfun and singular
value expansions. Section \ref{sec:tsve} discusses the
truncated singular value expansion method (TSVE). The Tikhonov regularization method is described in Section \ref{sec:tik}. Numerical results that illustrates the performances of the methods of Sections \ref{sec:tsve} and \ref{sec:tik} are reported in Section \ref{sec:numerical_experiments}. Concluding remarks can be found in Section \ref{sec:concl}. 

\section{Basics}\label{sec:basics}

Let $L^{2}(\Omega_{i})$ for $i=1,2$ be spaces of Lebesgue measurable square
integrable functions with inner products
\begin{equation}\label{inner} %
  \inner{a(t),b(t)}_{\Omega_{i}} = \int_{\Omega_{i}} {\conj{a(t)}}\, b(t)\,dt, \quad \text{
  for } i=1,2,
\end{equation} % 
where $\conj{a}$ represents the complex conjugate of
$a\in\C$. 
Based on these inner products, we can define $L^{2}$-norms by
\begin{equation*} %
  \norm{f(t)}_{\Omega_{i}}^{2}=\int_{\Omega_{i}} \abs{f(t)}^{2}\,dt, \quad \text{
  for } i=1,2.
\end{equation*} %
Throughout this paper $\norm{\cdot}$ stands for an $L^{2}$-norm. We will omit
the subscript if the domain is clear from the context. Since the spaces
$H_{i}:=L^{2}(\Omega_{i})$ for $i\in\{1,2\}$, with the inner products and norms
defined above, are complete vector spaces, they are \emph{Hilbert space}; see, e.g.,
\cite{Hac95}.

A given kernel $\kappa(\cdot,\cdot) \in L^{2}(\Omega_{1} \times \Omega_{2} )$
induces the bounded linear operator \cite[Thm.~3.2.7]{Hac95} 
% \footnote{One can show
%   that if the kernel is continumous and $\Omega_{i}$ is compact, then
%   \cite[Thm.~3.2.5]{Hac95} shows boundedness of the integral operator. If the
%   kernel is square integrable, then the kernel is called a Hilbert-Schmidt
%   kernel and boundedness of the operator has been shown \cite[Thm.~3.2.7]{Hac95}.}
% XXXX Why is this operator bounded and what conditions are required for
% $\kappa(\cdot,\cdot)$? XXXX
$A : L^{2}(\Omega_{1})\rightarrow L^{2}(\Omega_{2})$ or
$H_{1} \rightarrow H_{2}$ defined by 
\begin{equation}\label{definition_A} %
  (Ax)(s)=\int_{\Omega_{1}} \kappa(s,t)x(t)\,dt.
\end{equation} %
The operator is sometimes called a \emph{Hilbert-Schmidt integral operator} and
the kernel $\kappa$ a \emph{Hilbert-Schmidt kernel}.  This allows us to write
\eqref{inteq} as
\begin{equation}\label{Axg}%
  Ax = g.
\end{equation} 
In particular, we assume that $g$ is in the range of $A$. Generally, we are 
interested in the solution of \eqref{Axg} of minimal norm. We refer to this
solution as $x_{\text{exact}}$.

In practice, the right-hand side $g$ of \eqref{inteq} is often a measured
quantity and therefore is subject to observational errors. Thus, we assume that
the error-free function $g$ is not available--only an error contaminated approximation
$g^{\delta}\in H_{2}$ of $g$ is known. We assume that $g^{\delta}$ satisfies %
\begin{equation*} % this equation does not have a label => equation* 
  \norm{g-g^{\delta}} \leq \delta,
\end{equation*} %
with a \emph{known} bound $\delta>0$. The solution of the equation %
\begin{equation} \label{Axgd} %
  Ax=g^{\delta}, \quad \text{ with } x \in H_{1} \text{ and } g^{\delta} \in
  H_{2},
\end{equation} %
is generally not a meaningful approximation of the desired solution
$x_{\text{exact}}$ of \eqref{Axg}, since $A$ is not continuously invertible. 
In fact, the equation \eqref{Axgd} might not have a solution. 
% XXXX reference to
% the Riemann-Lebesgue lemma XX (R.\ R.\ Goldberg, Methods of Real Analysis, 2nd
% ed., Wiley, New York, 1976)
% XXXX % A regularization method replaces the operator
% $A$ by a nearby operator, such that the solution of the modified equation so
% obtained exists and is less sensitive to the error in $g^{\delta}$. The
% regularization method provides a meaningful approximation to
% $x_{\text{exact}}$. The possibly most popular regularization methods are
% truncated singular value decomposition (TSVD) and Tikhonov regularization.

The operator $A$ depends on the kernel $\kappa$. We will now have a closer look
at known theory about the kernel function $\kappa$. % XXXX Schmidt is not really clear why the series has countable many elements
% and not uncountable many. It remains unclear to me after all. XXXX
For any square integrable kernel $\kappa$, we define the singular value
expansion (SVE) \cite[\S4]{Schmidt1989} as
\begin{equation} \label{sve} %
  \kappa(s,t)=\sum_{i} \sigma_{i} \phi_{i}(s)\psi_{i}(t).
\end{equation}
The functions $\phi_{i}(s)$ and $\psi_{i}(t)$ are referred to as the 
singular functions. These functions are
orthonormal with respect to the usual inner product \eqref{inner}
\cite[\S5]{Schmidt1989}, i.e., %
\begin{align*}
  \inner{\psi_{i},\psi_{j}}_{\Omega_{1}} =
  \inner{\phi_{i},\phi_{j}}_{\Omega_{2}} 
  = \delta_{ij}, \quad \text{ with }i,j=1,2,\dotsc~.
\end{align*} %
The quantities $\sigma_{i}$ are known as singular values. It can be shown that
the only limit point of the singular values for square integrable kernels is
zero \cite[\S5]{Schmidt1989}.\footnote{\!\!\!Schmidt calls the singular values
  eigenvalues, since he is mainly concerned with symmetric kernels and the
  concept of singular values was not developed when he published his paper. We
  follow modern notation here.}  The singular values form a non-increasing
sequence: %
\begin{align*}
  \sigma_{1} \geq \sigma_{2} \geq \sigma_{3} \geq  \dotsb \geq 0. %
\end{align*} 
Let $\sum_{i=1}^{\infty}\sigma_{i}\phi_{i}(s)\psi(t)$ be a uniformly convergent
series. Then 
\begin{equation}\label{eq:SVE}
  \kappa(s,t)=\sum_{i=1}^{\infty}\sigma_{i}\phi_{i}(s)\psi(t),
\end{equation} %
as shown in \cite[\S8]{Schmidt1989}.  When the summation is finite, then the
kernel $\kappa(s,t)$ is said to be \emph{separable} (or \emph{degenerate}). Most
applications do not have a separable kernel. However, if the kernel is square
integrable, then it can be approximated well by a separable kernel with a 
suitable number of terms, $\ell$, in \eqref{eq:SVE}.  Let 
\begin{equation}\label{Aell} %
  \kappa_{\ell}=\sum_{i=1}^{\ell}\sigma_{i} \phi_{i}(s)\psi_{i}(t),
\end{equation} %
with the same ordering of the singular values. Then this is the closest kernel
of rank at most $\ell$ to $\kappa$ in the $L^{2}$-norm \cite[\S18 Approximation
Theorem]{Schmidt1989}. We will use this result to justify the application of the
truncated singular value expansion method (TSVE), which will be discussed in
Section~\ref{sec:tsve}.
%XXXX We could consider citing the theorem here, since we reference it quite a
%bit. However, that would mean finding a modern formulation or rewriting and
%translating it from out-dated German to modern English. XXXX

We will also be using this Approximation Theorem to generally restrict our
expansion to singular values greater than $\varepsilon$, where $\varepsilon$ is a
small enough cut-off--say $10^{-8}$ or $10^{-16}$. Here, there is a trade-off between
computing time and approximation accuracy. We try to choose
$\varepsilon$ far below the regularization error so that it does
not have a significant effect on the accuracy. At the same time, a small
$\varepsilon$ means higher cost for computing the singular value
expansion and forming the computed approximate solution.

In this paper we will use two regularization methods, TSVE and Tikhonov
regularization. %XXXX Maybe move this paragraph to the next sections. XXXX
The TSVE method is based on the Approximation Theorem mentioned above. We approximate the
kernel $\kappa$ by $\kappa_{\ell}$ for some suitable $\ell\geq 0$. This results
in an approximation $A_{\ell}$ to $A$ and a solution, denoted by $x_{\ell}$, of
the problem
\begin{equation} \label{tsvd} %
  (A_{\ell}x)(s)=\int_{\Omega_{1}} \kappa_{\ell}(s,t) x(t)\,dt = g^{\delta}(s),
  \quad s\in\Omega_2.
\end{equation} %
The parameter $\ell$ is a regularization parameter that determines how many
singular values and basis functions of $\kappa$ are used to compute the
approximate solution $x_{\ell}$ of~\eqref{Axgd}. The remaining singular values,
which are smaller than or equal to $\sigma_{\ell}$,
are ignored. The singular value $\sigma_{\ell}$
provides information on the approximation error.

Tikhonov regularization replaces the system \eqref{Axgd} by the 
penalized least-squares problem
\begin{equation}\label{tikhonov}
  \min_{x\in H_{1}}\{\norm{Ax-g^{\delta}}^{2}+\lambda^{2}\norm{x}^{2}\},
\end{equation}
which has a unique solution $x_{\lambda}$ for any positive value of the
regularization parameter $\lambda$. Substituting the SVE \eqref{sve} into
\eqref{tikhonov} shows that Tikhonov regularization dampens the contributions to
$x_{\lambda}$ of singular values and functions with large index $k$ the most;
increasing $\lambda>0$ results in more damping. Since we cannot
deal with an infinite series expansion, we will, in practice, first
cut-off all singular values that are less than $\varepsilon$ as explained above, and then
apply Tikhonov regularization.

The determination of suitable values of the regularization parameters, $\ell$ in
\eqref{tsvd} and $\lambda$ in \eqref{tikhonov}, is important for the quality of
the computed approximate solution. Several methods have been described in the
literature including the discrepancy principle, the L-curve criterion, and
generalized cross validation; see \cite{BRS,Ki,KR,RR} for recent discussions of
their properties and illustrations of their performance.  Regularization methods
typically require that regularized solutions for several parameter values be
computed and compared in order to determine a suitable value.

\subsection{Chebfun}\label{subsec:chebfun}
We solve \eqref{inteq} by first regularizing followed by discretization.
However, we still want to compute the solution numerically. Thus, we need a
numerical library that can handle functions in an efficient way. Since a
function is representing uncountable many pairs of $x$ and $f(x)$ with
$x\mapsto f(x)$, a computer can only handle approximations to functions
numerically.\footnote{\!\!\!There are some notable exceptions like $\sin(x)$ or
  $x^{2}$. However, we cannot assume that the solution of \eqref{inteq} will
  fall into this very small set of functions.}

We chose the Matlab package Chebfun \cite{chebfun} for this purpose.  Chebfun
uses piecewise Chebyshev polynomials, so called chebfuns, to approximate
functions. All computations within Chebfun's framework are done with these
approximations to the actual function. This in turn means that we project the
functions $g\in L^{2}(\Omega_{2})$ onto a space of piecewise Chebyshev
polynomials over~$\Omega_{2}$. One can argue that this is a discretization.
However, Chebfun's framework is significantly different from other
discretizations in the sense that it gives the user the feeling of computing
with functions.

Chebfun's functionality includes the computation of sums and products of
functions and derivatives, inner products, norms, and integrals. Chebfun2/3,
Chebfun's extension to functions of two and three variables, can also compute
outer products and, most importantly for us here, the singular value expansion
\cite{TowTre13}.  The algorithm behind the singular value expansion uses a
continuous analogue of adaptive cross approximation. This is where some of the
motivation for this work originates, since we recently analyzed the application
of adaptive cross approximation to the solution of ill-posed problems \cite{MaRevBVa15}.

The approximation of $\kappa(s,t)$ is computed by an iterative process. First,
an approximation of the maximum point $(x,y)$ of $\kappa(s,t)$ is determined. The computation
of the exact maximum point is not important. The function is then approximated by
\begin{align*}
 \kappa_{1}(s,t) = \frac{\kappa(s,y) \kappa(x,t)}{\kappa(s,t)},
\end{align*} where $\kappa(s,y)$ and $\kappa(x,t)$ are one-dimensional chebfuns
in $s$ and $t$, respectively.

This process is then repeated for $\kappa(s,t) - \kappa_{1}(s,t)$ to find a
rank-1 approximation of the remainder. By recursion one obtains after $k$ steps
a rank-$k$ approximation to the original kernel. As soon as the remainder is
sufficiently small, the computed rank-$k$ approximation is the sought approximation
to $\kappa(s,t)$. At the end we have $\kappa(s,t)\approx C(s)D R(t)^{T}$, with
$C(s)$ and $R(t)$ vectors of functions, and $D$ a dense matrix of size
$k\times k$.

Based on this approximation it is easy to compute the singular value
expansion. Chebfuns continous analogue of the QR factorization can be used to
find orthogonal bases for $C(s)$ and $R(t)$. The upper triangular matrices are
multiplied by $D$ to form a new matrix $\tilde{D}$. Then a singular value
decomposition of $D=U\Sigma V^{T}$ is computed. Finally, the small orthogonal
matrices $U$ and $V^{T}$ are combined with $C(s)$ and $R(t)$, respectively; see
\cite{TowTre13}. A very similar process, called adaptive cross approximation
\cite{q467,q699}, was used in \cite{MaRevBVa15} for the discrete case of
matrices and vectors.

Chebfun has some limitations. Currently only functions of at most three variables
can be approximated by Chebfun. Hence, we are limited to ill-posed problems in one
space-dimension, and to problems in two space-dimensions for which the kernel is
separable and also given in a separable representation. This is the case for the
kernel that models Gaussian blur in two space-dimensions, making Gaussian blur our only 
example in two space-dimensions in this paper.

Chebfun2 and Chebfun3 are further limited to domains that are tensor products of
intervals. Thus, in this paper all domains are rectangles or rectangular
boxes. Chebfun also needs multivariate functions to be of low rank for an
efficient approximation, that is there has to exist a sufficiently accurate
separable approximation. This is for instance not the case for the kernel
$\kappa(s,t) = st - \min(s,t)$ from the deriv2 example of the Regularization
Tools package \cite{PCH4}. This limits the application of the methods described
in this paper. However, the Chebfun package is still under development and
some of the limitations mentioned might not apply to future releases.

\section{The TSVE method}\label{sec:tsve}

Assume that the kernel is non-separable and can be expressed as
\begin{equation}\label{kernel}
  \kappa(s,t)=\sum_{i=1}^{\infty} \sigma_{i} \phi_{i}(s)\psi_{i}(t),
\end{equation} 
and that the solution can be written as
\begin{equation}\label{solexpress}
  x(t)=\sum_{j=1}^{\infty} \beta_{j} \psi_{j}(t).
\end{equation}
The fact that $\kappa$ is non-separable implies that all $\sigma_i$ are positive,
and the assumption that the solution is of the form \eqref{solexpress} essentially 
states that the
solution has no component in the null space of $A$. This assumption is justified since the
null space of $A$ is orthogonal to all the $\psi_{j}$ and, thus, a component in
the direction of the null space would increase the norm of the solution, but not
help with the approximation of \eqref{inteq}. 
%By
%comparison with the Tikhonov regularization, it is justified to keep the term
%$\lambda\norm{x}^{2}$ found in~\eqref{tikhonov} also as small as possible for TSVE
%regularization.
  
Substituting \eqref{kernel} and \eqref{solexpress} into \eqref{inteq}, and using
the orthonormality of the basis functions yields
\begin{align*}
  \sum_{i=1}^{\infty} \sigma_{i}\beta_{i}\phi_{i}(s)=g(s).
\end{align*} We further probe the equation with $\phi_{k}(s)$ for all $k$ and use the
orthonormality of the basis functions to obtain
\begin{align*}
  \sigma_{k}\beta_{k}=\int_{\Omega_{2}} \phi_{k}(s) g(s)\,ds,\qquad\forall\,  k. 
\end{align*} % https://tex.stackexchange.com/questions/503/why-is-preferable-to
Thus, the exact solution to \eqref{Axg} is given by
\begin{equation}\label{truesol1D}
  x(t)=\sum_{j=1}^{\infty} \beta_{j} \psi_{j}(t),\quad \text{ with } \beta_{j}
  = \frac{\int_{\Omega_{2}} \phi_{j}(s) g(s)\,ds}{\sigma_{j}}.
\end{equation}
If we truncate this series after $\ell$ terms and use the noisy right hand side
$g^{\delta}$ instead of $g$, then we obtain the TSVE solution to \eqref{Axgd}
defined by
\begin{equation}\label{TSVDsol1D} x_{\ell}(t)=\sum_{j=1}^{\ell}
  \beta^{\delta}_{j} \psi_{j}(t),\quad \text{ with } \beta^{\delta}_{j}=
  \frac{\int_{\Omega_{2}} \phi_{j}(s) g^{\delta}(s)\,ds}{\sigma_{j}}.
\end{equation}
The truncation parameter $\ell$ can be chosen as needed.

In the following lemma, we link the projection of the error onto the space
spanned by the $\phi_{i}(s)$ to the norm of the error. 
\begin{lemma}
Let $n(s)=g(s)-g^{\delta}(s)$ with $ \norm{n(s)} \leq \delta$. Then,
\begin{equation}\label{lemma3.1}
\sum_{i=1}^{\infty}\left(\int_{\Omega_{2}}
      \phi_{i}(s) n(s)\,ds\right)^{2}\leq \delta^{2}, 
\end{equation}      
      where $\phi_i(s)$ are orthonormal basis functions. 
\end{lemma}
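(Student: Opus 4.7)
The plan is to recognize the integrals $\int_{\Omega_{2}} \phi_{i}(s)n(s)\,ds$ as the Fourier coefficients $\inner{\phi_{i},n}_{\Omega_{2}}$ of $n$ with respect to the orthonormal system $\{\phi_{i}\}$, and then invoke Bessel's inequality. Since the $\phi_{i}(s)$ are orthonormal in $H_{2}=L^{2}(\Omega_{2})$ and the inner product is defined by \eqref{inner}, the inequality \eqref{lemma3.1} is precisely Bessel's inequality applied to $n\in H_{2}$.

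More concretely, I would proceed as follows. First, for any finite $N$, expand $\bignorm{n-\sum_{i=1}^{N}\inner{\phi_{i},n}\phi_{i}}^{2}\geq 0$, use linearity and conjugate-symmetry of the inner product together with $\inner{\phi_{i},\phi_{j}}=\delta_{ij}$ to obtain
\begin{equation*}
0\leq \norm{n}^{2}-\sum_{i=1}^{N}\abs{\inner{\phi_{i},n}}^{2}.
\end{equation*}
Rearranging gives $\sum_{i=1}^{N}\abs{\inner{\phi_{i},n}}^{2}\leq\norm{n}^{2}\leq \delta^{2}$ by hypothesis. Letting $N\to\infty$ yields the desired bound, since the partial sums are monotone non-decreasing and bounded above by $\delta^{2}$.

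The only mild subtlety is that the lemma is stated for real-valued integrals squared, whereas in general Bessel's inequality involves $\abs{\inner{\phi_{i},n}}^{2}$. In the real-valued setting this distinction disappears; in the complex setting one simply observes $(\int\phi_{i}n\,ds)^{2}\leq \abs{\inner{\phi_{i},n}}^{2}$ if one interprets the integral without conjugation, or more consistently reads the left-hand side of \eqref{lemma3.1} as $\abs{\inner{\phi_{i},n}}^{2}$ in accordance with \eqref{inner}. Either way, no genuine obstacle arises: the proof reduces to one application of the standard Bessel inequality together with the noise bound $\norm{n}\leq\delta$.
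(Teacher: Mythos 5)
Your proof is correct and follows essentially the same route as the paper: both arguments boil down to Bessel's inequality for the orthonormal system $\{\phi_i\}$ together with the bound $\norm{n}\leq\delta$. If anything, your finite-$N$ partial-sum argument is slightly more self-contained, since the paper instead writes $n(s)=\sum_j\gamma_j\phi_j(s)+\phi^{\perp}(s)$, identifies $\int_{\Omega_2}\phi_i(s)n(s)\,ds=\gamma_i$, and then appeals to the same Bessel-type bound $\sum_j\gamma_j^2\leq\norm{n}^2\leq\delta^2$ without spelling it out.
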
 
\begin{proof}
  Using the basis functions $\phi_i(s)$, $n(s)$ can be represented
  as
  \begin{align*}
    n(s)=\sum\limits_{j=1}^{\infty} \gamma_{j}\phi_{j}(s)+\phi^{\perp}(s),
  \end{align*}
for certain coefficients $\gamma_j$, and
where $\phi^{\perp}(s)$ is orthogonal to all functions $\phi_j(s)$.
  Then,
 \begin{align*}
   \int_{\Omega_{2}} \phi_{i}(s) n(s)\,ds&=\int_{\Omega_{2}}
   \phi_{i}(s) \left(\sum\limits_{j=1}^{\infty} \gamma_{j}\phi_{j}(s)+\phi^{\perp}(s) \right)\,ds.\\
   \intertext{The orthogonality of the basis functions $\phi_{j}$ allows us to
   simplify the above expression to}
   \int_{\Omega_{2}}
   \phi_{i}(s) n(s)\,ds&=\gamma_{i}.
 \end{align*}
The same argument can be used to show that
$$\sum\limits_{j=1}^{\infty} \gamma_{j}^{2} \leq \norm{n(s)}^{2} \leq
 \delta^{2}. $$ 
 Combining these results shows \eqref{lemma3.1}. \hfill$\square$
\end{proof}

We will now use the previous lemma to given an upper bound for the error of the
solution obtained with the TSVE regularization method.   
\begin{lemma}
  \label{lemma:tsvd_error}
  Let $x(t)$ and $x_{\ell}(t)$ be the exact solution and the TSVE regularized
  solutions given by \eqref{truesol1D} and \eqref{TSVDsol1D}, respectively.
  Assume the kernel $\kappa(s,t)$ has finite rank $r$. Then,
  	\begin{equation}\label{lemmatsve}
     	 \norm{x(t)-x_{\ell}(t)} \leq \left(\frac{\delta^{2}}{\sigma_{\ell}^{2}} 			+\sum\limits_{i=\ell+1}^{r} \beta_{i}^{2}\right)^{1/2}.
    \end{equation}
\end{lemma}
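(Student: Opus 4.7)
The plan is to expand $x(t)-x_\ell(t)$ in the orthonormal basis $\{\psi_j\}$ and exploit orthonormality to turn the $L^2$-norm into a sum of squared coefficients, then estimate the two natural pieces separately: the noise-propagation part (indices $1,\dots,\ell$) and the truncation part (indices $\ell+1,\dots,r$).

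First, since $\kappa$ has finite rank $r$, the expansion \eqref{truesol1D} is really a finite sum $x(t)=\sum_{j=1}^{r}\beta_j\psi_j(t)$. Writing $x_\ell(t)=\sum_{j=1}^{\ell}\beta_j^\delta\psi_j(t)$, I split
\begin{equation*}
x(t)-x_\ell(t)=\sum_{j=1}^{\ell}(\beta_j-\beta_j^\delta)\psi_j(t)+\sum_{j=\ell+1}^{r}\beta_j\psi_j(t).
\end{equation*}
Orthonormality of the $\psi_j$ then gives
\begin{equation*}
\norm{x(t)-x_\ell(t)}^2=\sum_{j=1}^{\ell}(\beta_j-\beta_j^\delta)^2+\sum_{j=\ell+1}^{r}\beta_j^2,
\end{equation*}
so the only real work is to bound the first sum by $\delta^2/\sigma_\ell^2$.

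For that, I use the explicit formulas for $\beta_j$ and $\beta_j^\delta$ from \eqref{truesol1D} and \eqref{TSVDsol1D}: by linearity,
\begin{equation*}
\beta_j-\beta_j^\delta=\frac{1}{\sigma_j}\int_{\Omega_2}\phi_j(s)\bigl(g(s)-g^\delta(s)\bigr)\,ds=\frac{1}{\sigma_j}\int_{\Omega_2}\phi_j(s)n(s)\,ds.
\end{equation*}
Since the singular values are non-increasing, $\sigma_j\ge\sigma_\ell$ for $j\le\ell$, hence $1/\sigma_j^2\le 1/\sigma_\ell^2$, and therefore
\begin{equation*}
\sum_{j=1}^{\ell}(\beta_j-\beta_j^\delta)^2\le\frac{1}{\sigma_\ell^2}\sum_{j=1}^{\ell}\left(\int_{\Omega_2}\phi_j(s)n(s)\,ds\right)^2\le\frac{1}{\sigma_\ell^2}\sum_{j=1}^{\infty}\left(\int_{\Omega_2}\phi_j(s)n(s)\,ds\right)^2\le\frac{\delta^2}{\sigma_\ell^2},
\end{equation*}
where the last inequality is exactly the conclusion of the preceding lemma.

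Putting the two estimates together yields $\norm{x(t)-x_\ell(t)}^2\le\delta^2/\sigma_\ell^2+\sum_{i=\ell+1}^{r}\beta_i^2$, and taking square roots delivers \eqref{lemmatsve}. There is no real obstacle here: the only subtlety is keeping track that the first lemma is applied to the full infinite series of Fourier coefficients of $n$ against the $\phi_j$, of which the partial sum for $j=1,\dots,\ell$ is then bounded by the full sum. The finite-rank assumption is used only to make the truncation tail $\sum_{j=\ell+1}^{r}\beta_j^2$ finite and to drop any contribution from singular values beyond index $r$.
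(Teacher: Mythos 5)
Your proof is correct and follows essentially the same route as the paper: expand $x-x_\ell$ in the orthonormal basis $\{\psi_j\}$, identify $\beta_j-\beta_j^\delta$ via \eqref{truesol1D} and \eqref{TSVDsol1D}, and combine the preceding lemma with the monotonicity $\sigma_j\ge\sigma_\ell$ to bound the noise part by $\delta^2/\sigma_\ell^2$. The only cosmetic difference is that you keep the orthogonal splitting as an equality where the paper states it as an inequality; both are valid.
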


\begin{proof} We will rely on the expansion of the
  solution in the space spanned by the functions $\psi_{i}(t)$.
We have
  \begin{align*}
    \norm{x(t)-x_{\ell}(t)}^{2}&=\norm{\sum\limits_{i=1}^{r} \beta_{i}\psi_{i}(t)
      -\sum\limits_{i=1}^{\ell} \beta^{\delta}_{i} \psi_{i}(t)}^{2}.\\
    \intertext{Using \eqref{truesol1D} and \eqref{TSVDsol1D} this simplifies to}
      \norm{x(t)-x_{\ell}(t)}^{2}&=\norm{\sum\limits_{i=1}^{\ell} \frac{\int_{\Omega_{2}}
        \phi_{i}(s) \left(g(s)-g^{\delta}(s)\right)\,ds}{\sigma_{i}}
        \psi_{i}(t)+\sum\limits_{i=\ell+1}^{r} \beta_{i}
        \psi_{i}(t)}^{2}\\
      &\leq \norm{\sum\limits_{i=1}^{\ell} \frac{\int_{\Omega_{2}} \phi_{i}(s) 
        \left(g(s)-g^{\delta}(s)\right)\,ds}{\sigma_{i}} \psi_{i}(t)}^{2}
        + \norm{\sum\limits_{i=\ell+1}^{r} \beta_{i} \psi_{i}(t)}^{2}.\\
    \intertext{The orthonormality of the basis functions $\psi_{i}$ allows us to simplify the above inequality to}   
      \norm{x(t)-x_{\ell}(t)}^{2}&\leq \sum\limits_{i=1}^{\ell} \left(\frac{\int_{\Omega_{2}} \phi_{i}(s) 
        \left(g(s)-g^{\delta}(s)\right)\,ds}{\sigma_{i}}\right)^{2}+\sum\limits_{i=\ell+1}^{r} \beta_{i}^{2}.\\
        \intertext{Using Lemma \ref{lemma3.1} and the fact the singular values are in non-increasing order gives } 
     \norm{x(t)-x_{\ell}(t)}^{2}   &\leq \frac{\delta^{2}}{\sigma_{\ell}^{2}} +\sum\limits_{i=\ell+1}^{r} \beta_{i}^{2}.
  \end{align*} \hfill$\square$
\end{proof}

\begin{figure}[t]
  \centering
  \begin{tikzpicture}
    \begin{axis}[
      width=5in, %
      height=2.2in, %
      ybar, %
      ymax=2.5, %
      legend style={%
        at={(0.95,0.95)}, %
        anchor=north east, %
        cells={anchor=west}, %
      }, %
      %ylabel={YLABEL}, %
      symbolic x coords={Baart, Foxgood, Gravity, Shaw, Wing}, %
      xtick=data, %
      %nodes near coords,
      %nodes near coords align={vertical},
      ]

      % left-hand side
      \addplot[fill=SPECorange] coordinates {%
        (Baart,0.209830752166255) %
        (Foxgood,0.017964227827709) %
        (Gravity,0.036927582968480) %
        (Shaw, 0.086674859374278) %
        (Wing, 0.348049438529407) %
      };
      % right-hand side
      \addplot[fill=SPECred] coordinates {%
        (Baart,0.250507045532956) %
        (Foxgood,1.901536166479170) %
        (Gravity,1.874650936624351) %
        (Shaw, 1.518125686327245) %
        (Wing, 0.423628546907915) %
      };
      \legend{left-hand side of \eqref{lemmatsve},right-hand side of \eqref{lemmatsve}}
    \end{axis}
  \end{tikzpicture}
  \caption{Behavior of the bound \eqref{lemmatsve} for the examples Baart,
    Foxgood, Gravity, Shaw, and Wing.}\label{fig:1}
\end{figure}
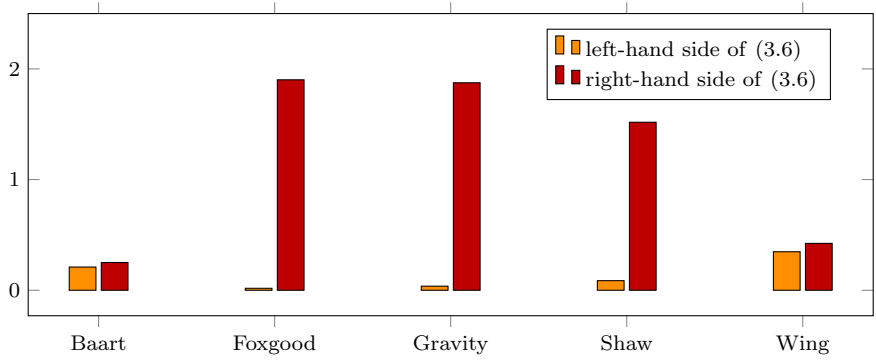

Lemma~\ref{lemma:tsvd_error} provides a justification for chosing $\ell$ such that
\begin{align*}
  \sigma_{\ell}\leq \eta \delta, 
\end{align*} %
with $\eta$ being a small constant greater than $1$. If $\sigma_{\ell}=\delta$,
then the bound from Lemma~\ref{lemma:tsvd_error} is at least $1$. Choosing
$\eta$ larger means that $\frac{\delta^{2}}{\sigma_{\ell}}$ will be
smaller. However, there is a trade-off, since additional $\beta_{i}$ have to be
included in the bound. Generally, choosing $\eta$ between to $2$ and $5$ is
reasonable. Figure \ref{fig:1} illustrates the behavior of the bound
\eqref{lemmatsve} for some numerical examples.

Ill-posed problems based on one-dimensional integral equations are arguably less
challenging than 2D-problems. Thus, consider the two-dimensional Fredholm
integral equations of the first kind,
\begin{equation}\label{integral2d}
  \int_{\Omega_{1}} \kappa(s_{1},s_{2},t_{1},t_{2}) x(t_{1},t_{2})\,dt_{1}dt_{2}
  = 
  g^{\delta}(s_{1},s_{2}),\qquad (s_{1},s_{2})\in\Omega_{2}.
\end{equation}
We employ Chebfun for the numerical solution of the ill-posed
problems. Hence, we are limited by Chebfun's capabilities to deal with 
higher-dimensional functions. A kernel that can be separated into a product of two
functions, i.e.,
$\kappa(s_{1},s_{2},t_{1},t_{2})=\kappa_{1}(s_{1},t_{1})\times\kappa_{2}(s_{2},t_{2}),$
can be handled by Chebfun. The kernel that models Gaussian blur provides an example and
will be used in a numerical illustration. Let the kernel be given by
\begin{equation}\label{kernel2d}
  \kappa(s_{1},s_{2},t_{1},t_{2})=\sum\limits_{i=1}^{r_{1}} \sigma_{i}
  \phi_{i}^{(1)}(s_{1})\psi_{i}^{(1)}(t_{1})\ 
  \sum\limits_{j=1}^{r_{2}} \mu_{j} \phi_{j}^{(2)}(s_{2})\psi_{j}^{(2)}(t_{2}),
\end{equation} 
where both the $\sigma_i$ and $\mu_j$ denote singular values,
and let the solution be of the form 
\begin{equation}\label{solexp2d}
  x(t_{1},t_{2})= \sum\limits_{k=1}^{r_{1}} \sum\limits_{\ell=1}^{r_{2}} \beta_{k \ell}  \psi_{k}^{(1)}(t_{1})\psi_{\ell}^{(2)}(t_{2}).
\end{equation}
By substituting \eqref{kernel2d} and \eqref{solexp2d} into \eqref{integral2d},
and using the orthonormality of the basis functions, we get
\begin{align*}
  \sigma_{i} \mu_{j} \beta_{ij} \phi_{i}^{(1)}(s_{1})  \phi_{j}^{(2)}(s_{2}) =g^{\delta}(s_{1},s_{2}). 
\end{align*}
We further probe the equation with
$\phi_{k}^{(1)}(s_{1}) \phi_{\ell}^{(2)}(s_{2})$ for all $k$ and $\ell$ and use
the orthonormality of the basis functions to obtain
\begin{equation}\label{coeff2d}
  \beta_{ij}  =\frac{\int_{\Omega_{2}} g^{\delta}(s_{1},s_{2})
    \phi_{i}^{(1)}(s_{1})\phi_{j}^{(2)}(s_{2})\,ds_{1} ds_{2}}{\sigma_{i} \mu_{j}}.
\end{equation} This allows us to implement the solution algorithm using at most functions of
three variables and, thus, not exceeding Chebfun3's capabilities \cite{HasTre17}.

In order to solve problems in two space-dimensions with a non-separable kernel, we would 
need Chebfun4, which currently is not available.

\section{Tikhonov regularization}\label{sec:tik}

For Tikhonov regularization, instead of solving \eqref{Axg} exactly, we solve
the functional minimization problem
\begin{equation}\label{tikmineq}
  \min_{x \in {H_{1}}}\left\{ \norm{Ax-g^{\delta}}^{2}+\lambda^{2}\norm{x}^{2}\right\},
\end{equation}
where $\lambda$ is a fixed positive number. Using the definition of $L^{2}$-norm,
equation \eqref{tikmineq} can be written as
\begin{equation}\label{3.2}
  \underset{x\in {H_{1}}}{\text{min}}\left\{\int_{\Omega_{2}}
  \bigabs{Ax-g^{\delta}}^{2}\,ds 
  + \lambda^{2} \int_{\Omega_{1}} \abs{x}^{2}\,dt\right\}.
\end{equation}
By substituting \eqref{kernel} and \eqref{solexpress} into \eqref{3.2}, and by
using the orthonormality of the basis functions, we obtain
\begin{align*}
  \min_{x(t)\in {H_{1}}} \sum\limits_{i=1}^{\infty} \left(\sigma^{2}_{i}
  \beta^{2}_{i} -2\sigma_{i} \beta_{i} \int_{\Omega_{2}} {\phi_{i}(s)}
  g^{\delta}(s)\,ds
  + \lambda^{2}\beta^{2}_{i}\right) +\int_{\Omega_{2}}\abs{g^{\delta}(s)}^{2}\,ds. 
\end{align*}
Thus, we can compute the solution as
\begin{equation}\label{solTik1D}
  x_{\lambda}(t)=\sum\limits_{j=1}^{\infty} \beta^{(\lambda)}_{j} \psi_{j}(t),\quad 
  \text{ with } \beta^{(\lambda)}_{j}= \frac{\sigma_{i} 
    \int_{\Omega_{2}} {\phi_{i}(s)} g^{\delta}(s)\,ds}{(\sigma^{2}_{i} +\lambda^{2})}.
\end{equation}

For the two-dimensional case, instead of solving \eqref{integral2d} exactly, we
solve
\begin{equation}\label{Tikeq2d}
\begin{split}
  \min_{x(t_1,t_2)\in {H_1}}\bigg\{\bignorm{\int_{\Omega_1}
    \kappa(s_1,s_2,t_1,t_2)x(t_1,t_2) \,dt_1\,dt_2 - g^{\delta}(s_1,s_2) }^{2} +\\
  \lambda^{2} \norm{ x(t_1,t_2)}^{2}\bigg\}.
\end{split}
\end{equation} 
By substituting \eqref{kernel2d} and \eqref{solexp2d} into \eqref{Tikeq2d}, and
using the orthonormality of the basis functions, we get
\begin{equation}
\begin{split}
  \min_{x(t_{1},t_{2})\in {H_{1}}}\sum_{i=1}^{r_{1}} \sum_{j=1}^{r_{2}} \bigg(
  \beta^{2}_{ij} \sigma^{2}_{i} \mu^{2}_{j} -2 \beta_{ij}\sigma_{i} \mu_{j}
  \int_{\Omega_{2}}\phi_{i}^{(1)}(s_{1})\phi_{j}^{(2)}(s_{2})\
  g^{\delta}(s_{1},s_{2})\,ds_{1}\,ds_{2} +\\ \lambda^{2} \beta^{2}_{ij} \bigg)
  +\int_{\Omega_{2}} \abs{ g^{\delta}(s_{1},s_{2})}^{2} \,ds_{1}\,ds_{2},
\end{split}
\end{equation}
and we can compute the solution by
\begin{equation}\label{solTik2D}
\begin{split}
  x_{\lambda}(t_{1},t_{2})&= \sum_{k=1}^{r_{1}} \sum_{\ell=1}^{r_{2}} \beta_{k
    \ell} \psi_{k}^{(1)}(t_{1})\psi_{\ell}^{(2)}(t_{2}),\\ \text{ with }
  \beta_{k\ell}&= \frac{\sigma_{k}
    \mu_{\ell}\int_{\Omega_{2}}\phi_{k}^{(1)}(s_{1})\phi_{\ell}^{(2)}(s_{2})\
    g^{\delta}(s_{1},s_{2})\,ds_{1}\,ds_{2}}{\sigma^{2}_{k} \mu^{2}_{\ell} +
    \lambda^{2}} .
\end{split}
\end{equation}
\section{Numerical experiments}\label{sec:numerical_experiments}
In this section we illustrate the performance of the methods described in
Sections~\ref{sec:tsve} and~\ref{sec:tik} by reporting some numerical results.

We first consider five test problems in one space-dimension. These problems are
from Regularization Tools by Hansen \cite{PCH4}. This will be followed by
applying the methods to a 2-D problem. All computations
were carried out in MATLAB R2017a with about 15 significant
decimal digits running on a laptop computer with core CPU Intel(R) Core(TM)i7-7Y75 @1.30GHz 1.60GHz processor with
16GB of RAM.

Each test problem from Regularization Tools by Hansen \cite{PCH4} provides us
with an integral equation of the form \eqref{inteq}. These problems are discretized
by a Nystr\"{o}m method or a Galerkin method with orthogonal test and trial functions
to give a linear system of equations $\tilde{A}\bx=\bg$, where
$\tilde{A}\in\R^{n\times n}$ is the discretized integral operator, $\bx\in\R^n$
is a discretization of the exact solution $x_{\text{exact}}$, and $\bg\in\R^n$ is 
the corresponding error-free right-hand side vector. We generate the 
error-contaminated vector $\bg^{\delta}\in\R^n$ according to 
\begin{equation*} 
 \bg^{\delta}=\bg+\alpha\frac{\norm{\bg}_2}{\norm{\be}_2}\be, 
\end{equation*}
where $\be\in\R^{n}$ is a random vector whose entries are from a normal
distribution with mean zero and variance one.  %In the computed examples, we let
%$n=5000$ and $\alpha=\lbrace 10^{-1},10^{-2},10^{-3} \rbrace$. 
In our methods,we use the Matlab package Chebfun \cite{chebfun} to represents the kernel
$\kappa(s,t)$, a function $g(s)$ that represents the error-free right-hand side,
and the desired solution $x(t)$. We define the error-contaminated function
$g^{\delta}(s)$ by
\begin{equation*} 
g^{\delta}(s)=g(s)+\alpha \frac{\norm{g(s)}}{\norm{F(s)}}F(s), 
\end{equation*}
where $F(s)$ is a smooth Chebfun function with maximum frequency about
$2\pi/\vartheta$ and standard normal distribution $N(0,1)$ at each point and
$\alpha$ is the noise level.  In the computed examples, we let
$\vartheta=10^{-2}$. This is Chebfun's analogue to noise. Alternatively, we can
use the discretized right-hand side from regularization tools \cite{PCH4}.

The discrepancy principle is used to determine the truncation parameter~$\ell$
in \eqref{TSVDsol1D} in the TSVE method, and the Tikhonov regularization parameter 
$\lambda$ in \eqref{solTik1D}. The discrepancy principle prescribes that the truncation index
$\ell$ be chosen as small as possible so that the solution $x_{\ell}(t)$ of
\eqref{TSVDsol1D} satisfies
\begin{equation*}
\norm{\int_{\Omega_{1}} \kappa(s,t) x_{\ell}(t)\,dt - g^{\delta}(s)} \leq \eta \delta,
\end{equation*}
where $\eta \geq 1$ is a user-supplied constant independent of $\delta$. The
discrepancy principle, when used with Tikhonov regularization, prescribes
that the regularization parameter $\lambda > 0$ be chosen so that the solution
$x_{\lambda}$ of \eqref{tikmineq} satisfies
\begin{equation*}
  \norm{\int_{\Omega_{1}} \kappa(s,t) x_{\lambda}(t)\,dt - g^{\delta}(s)} = \eta \delta.
\end{equation*}
We use the MATLAB function \texttt{fminbnd} to find the $\lambda$-value and we
let $\eta=1$.

One of the five test problems that we are interested in solving is Baart. This
example is a Fredholm integral equation of the first kind \eqref{inteq} with
$ \kappa(s, t) = \exp(s\, \cos(t))$, $g(s) = 2\,\sinh(s)/s$, and solution
$x(t) = \sin(t)$, where $\Omega_1 = [0, \pi]$ and $\Omega_{2} = [0, \pi/2]$.

We will compute approximate solutions of $x(t) = \sin(t)$ by applying TSVE and
Tikhonov regularization with Chebfun.  These approximate solutions
$x_{\ell}(t)$ and $x_{\lambda}(t)$ can be computed by using the formulas
\eqref{TSVDsol1D} and \eqref{solTik1D}, respectively.
\input{Baart_figure5_2}
Fig.\ \ref{fig:5.1}(a) displays the kernel $\kappa(s, t)$ of the Baart example. The
right-hand side function $g(s)$ and the corresponding error-contaminated
function $g^{\delta}(s)$ are illustrated in Fig.\ \ref{fig:5.1}(b), where the
level noise is $10^{-2}$.  Fig.\ \ref{fig:5.1}(c) depicts the exact solution and
the computed approximate solutions determined by TSVE and Tikhonov
regularization with Chebfun. The latter figure shows that our methods give good
approximation solutions of the exact solution.

Next, we will apply our methods to several different examples.  Moreover, we
will compare the methods with standard TSVD and Tikhonov regularization in
discretized setting. The quality of the computed approximate solutions is
measured with the relative error norm
\begin{equation*}
  RE:=\frac{\norm{x_{\text{method}}-x}_{*}}{\norm{x}_{*}} ,
\end{equation*}
where $\norm{\cdot}_{*}$ denotes the Euclidean vector norm
$(\frac{1}{n}\sum_{i=1}^{n}x_{i}^2)^{1/2}$ if $x$ is a vector, or
the $L^{2}$-norm if $x$ is a function.
\begin{table}[tbp]
\begin{center}
\renewcommand{\arraystretch}{1.3}
\caption{Comparison of TSVE with Chebfun and for the discretized problem.}
%\vspace{1ex}
\footnotesize
\begin{tabular}{ccccccc}
\toprule
 \multirow{2}{3em}{Noise level}& \multirow{2}{3em}{Example}&\multicolumn{3}{c}{discretized} &\multicolumn{2}{c}{with Chebfun}\\
 && n  & RE&CPU  & RE&CPU\\ 
 \midrule
\multirow{5}{3em}{$10^{-3}$} &\texttt{baart} &$1376$&$1.1479\cdot 10^{-1}$& $2.1327\cdot 10^{0}$& $1.1479\cdot 10^{-1}$ &$4.8078\cdot 10^{-1}$\\ 
 & \texttt{foxgood} &$332$&$9.8663\cdot 10^{-3}$& $4.6138\cdot 10^{-2}$& $9.8653\cdot 10^{-3}$ &$4.0253\cdot 10^{0}$\\ 
 & \texttt{gravity}&$209$&$1.9936\cdot 10^{-2}$& $3.2203\cdot 10^{-2}$& $1.9939\cdot 10^{-2}$ &$2.8642\cdot 10^{0}$\\ 
 &  \texttt{shaw} &$7$&$3.9299\cdot 10^{-2}$& $1.3183\cdot 10^{-3}$& $4.1005\cdot 10^{-2}$ &$1.6327\cdot 10^{0}$\\ 
&  \texttt{wing} &$822$&$6.0280\cdot 10^{-1}$& $2.9813\cdot 10^{-1}$& $6.0280\cdot 10^{-1}$ &$5.2352\cdot 10^{-1}$ \\ 
\midrule
 \multirow{5}{3em}{$10^{-2}$} &\texttt{baart} &$470$&$1.6644\cdot 10^{-1}$& $7.6005\cdot 10^{-2}$& $1.6644\cdot 10^{-1}$ &$ 1.6913\cdot 10^{-1}$\\ 
 & \texttt{foxgood} &$327$&$3.1572\cdot 10^{-2}$& $2.5878\cdot 10^{-2}$& $3.1575\cdot 10^{-2}$ &$1.8004\cdot 10^{0}$\\  
 &  \texttt{gravity}&$152$&$4.0750\cdot 10^{-2}$& $4.6737\cdot 10^{-3}$& $4.0751\cdot 10^{-2}$ &$1.0127\cdot 10^{0}$\\ 
 &  \texttt{shaw} &$720$&$1.3119\cdot 10^{-1}$& $2.4440\cdot 10^{-1}$& $1.3087\cdot 10^{-1}$ &$1.0146\cdot 10^{0}$ \\  
 & \texttt{wing} &$264$&$6.0280\cdot 10^{-1}$& $1.1490\cdot 10^{-2}$& $6.0280\cdot 10^{-1}$ &$1.1596\cdot 10^{-1}$\\  
\midrule
 \multirow{5}{3em}{$10^{-1}$}& \texttt{baart} &$460$&$3.4643\cdot 10^{-1}$& $6.0992\cdot 10^{-2}$& $3.4644\cdot 10^{-1}$ &$4.7546\cdot 10^{-1}$\\
&  \texttt{foxgood} &$765$&$7.5584\cdot 10^{-2}$& $2.5744\cdot 10^{-1}$& $7.5813\cdot 10^{-2}$ &$1.6939\cdot 10^{0}$\\  
&  \texttt{gravity}&$1730$&$6.6598\cdot 10^{-2}$& $3.6553\cdot 10^{0}$& $6.6607\cdot 10^{-2}$ &$6.9005\cdot 10^{-1}$\\  
&  \texttt{shaw} &$1703$&$1.5246\cdot 10^{-1}$& $2.7470\cdot10^{0}$& $1.5267\cdot 10^{-1}$ &$4.2545\cdot 10^{-1}$\\  
 &  \texttt{wing}&$276$&$6.1568\cdot 10^{-1}$& $2.2018\cdot 10^{-2}$& $6.1542\cdot 10^{-1}$ &$5.0701\cdot 10^{-1}$\\  
\bottomrule
\end{tabular}
\label{tab:comparison of TSVE}
\end{center}
\end{table}
%%%%%%%%%%%%%%%%%%%%%%%%%%%%%%%%%%%%%%%%%%%%%%%%%%%%%%%%%%%%%%%%%%%%%%%%%%%%%%%%%%%%%%%%%%%%%%%%%%%%%%%%%%%%%%%%
\begin{table}[tbp]
\begin{center}
\renewcommand{\arraystretch}{1.3}
\caption{Comparison of Tikhonov regularization with Chebfun and for the discretized problem.}
%\vspace{1ex}
\footnotesize
\begin{tabular}{ccccccc}
\toprule
 \multirow{2}{3em}{Noise level}& \multirow{2}{3em}{Example}&\multicolumn{3}{c}{discretized} &\multicolumn{2}{c}{with Chebfun}\\
 && n  & RE&CPU  & RE&CPU\\ 
 \midrule  
\multirow{5}{3em}{$10^{-3}$} &\texttt{baart} &$184$&$1.3228\cdot 10^{-1}$& $6.9910\cdot 10^{-3}$& $1.3220\cdot 10^{-1}$ &$2.8148\cdot 10^{0}$\\ 
 & \texttt{foxgood} &$363$&$1.2252\cdot 10^{-2}$& $5.2424\cdot 10^{-2}$& $1.2250\cdot 10^{-2}$ &$4.5776\cdot 10^{1}$\\ 
 & \texttt{gravity}&$1250$&$1.5306\cdot 10^{-2}$& $1.1072\cdot 10^{0}$& $1.5298\cdot 10^{-2}$ &$5.7426\cdot 10^{0}$\\ 
 &  \texttt{shaw} &$947$&$4.4255\cdot 10^{-2}$& $5.2421\cdot 10^{-1}$& $4.4253\cdot 10^{-2}$ &$7.3928\cdot 10^{0}$\\ 
&  \texttt{wing} &$1353$&$6.0277\cdot 10^{-1}$& $2.0925\cdot 10^{0}$& $6.0277\cdot 10^{-1}$ &$1.0410\cdot 10^{1}$ \\ 
\midrule
 \multirow{5}{3em}{$10^{-2}$} &\texttt{baart} &$1332$&$1.7066\cdot 10^{-1}$& $1.6434\cdot 10^{0}$& $1.7067\cdot 10^{-1}$ &$1.8585\cdot 10^{0}$\\ 
 & \texttt{foxgood} &$1922$&$2.3125\cdot 10^{-2}$& $4.5407\cdot 10^{0}$& $2.3124\cdot 10^{-2}$ &$3.7736\cdot 10^{1}$\\  
 &  \texttt{gravity}&$1527$&$2.8709\cdot 10^{-2}$& $2.2178\cdot 10^{0}$& $2.8708\cdot 10^{-2}$ &$5.3568\cdot 10^{0}$\\ 
 &  \texttt{shaw} &$186$&$1.1000\cdot 10^{-1}$& $ 1.5628\cdot 10^{-2}$& $1.0998\cdot 10^{-1}$ &$5.5254\cdot 10^{0}$ \\  
 & \texttt{wing} &$1232$&$6.0340\cdot 10^{-1}$& $1.4646\cdot 10^{0}$& $6.0340\cdot 10^{-1}$ &$3.2663\cdot 10^{0}$\\  
\midrule  
 \multirow{5}{3em}{$10^{-1}$}& \texttt{baart} &$568$&$ 2.2781\cdot 10^{-1}$& $ 1.3464\cdot 10^{-1}$& $2.2769\cdot 10^{-1}$ &$1.7098\cdot 10^{0}$\\
&  \texttt{foxgood} &$154$&$5.4066\cdot 10^{-2}$& $1.4565\cdot 10^{-2}$& $5.4079\cdot 10^{-2}$ &$3.3878\cdot 10^{1}$\\  
&  \texttt{gravity}&$163$&$8.8483\cdot 10^{-2}$& $1.4838\cdot 10^{-2}$& $8.8507\cdot 10^{-2}$ &$1.8138\cdot 10^{1}$\\  
&  \texttt{shaw} &$862$&$1.6105\cdot 10^{-1}$& $4.3403\cdot 10^{-1}$& $1.6106\cdot 10^{-1}$ &$4.9331\cdot 10^{0}$\\  
 &  \texttt{wing} &$12$&$6.5959\cdot 10^{-1}$& $6.1593\cdot 10^{-3}$& $6.5836\cdot 10^{-1}$ &$7.2559\cdot 10^{0}$\\  
\bottomrule
\end{tabular}
\label{tab:Comparison of Tikhonov regularization}
\end{center}
\end{table}

%%% Local Variables: 
%%% mode: LaTeX
%%% TeX-master: "paper28"
%%% TeX-PDF-mode:t
%%% TeX-engine: luatex
%%% auto-fill-function:nil
%%% mode:auto-fill
%%% flyspell-mode:nil
%%% mode:flyspell
%%% ispell-local-dictionary: "american"
%%% End: 

Tables \ref{tab:comparison of TSVE} and \ref{tab:Comparison of Tikhonov regularization} 
compare the TSVE and Tikhonov regularization methods when used with Chebfun and with 
standard methods for the test problems Baart, Foxgood, Gravity, Shaw, and Wing from
\cite{PCH4}. Three noise levels $\alpha$ are considered. The number of discretization 
points, $n$, which is shown in the third column of the tables, is chosen to be between 
$1$ and $2000$, so that the smallest absolute difference between the relative error of 
the solution for the discretized problem and the relative error of the solution for 
the continuous problem is achieved. Thus, we choose the number of discretization points
$n$ so that the discretized problem gives an approximate solution of about the same 
accuracy as the approximate solution determined with Chebfun. This choice makes a
comparison of the CPU-times required by the methods meaningful. The relative errors 
obtained by applying TSVD and 
Tikhonov regularization in the discretized setting are reported in the fourth column 
of Tables \ref{tab:comparison of TSVE} and \ref{tab:Comparison of Tikhonov regularization},
respectively. The sixth column of the tables shows the relative errors obtained when
applying TSVE and Tikhonov regularization with Chebfun. We also report the CPU times 
in seconds for each method in the fifth and seventh columns of tables. The tables 
show the computed approximate solutions determined by Chebfun-based methods to give 
as accurate approximations of the exact solutions as the approximate solutions 
determined by standard methods for the discretized problems. Moreover, we observe that 
the methods based on Chebfun are competitive time-wise for some problems, while they 
are slower for most problems. The last column of Tables \ref{tab:comparison of TSVE} 
and \ref{tab:Comparison of Tikhonov regularization} shows that applying TSVE with 
Chebfun is faster than applying Tikhonov regularization with Chebfun. This is 
reasonable since the TSVE method does not require the use of a root-finder.
%______________noise level 10^(-3)______________%
%\input{baart}
%\input{foxgood}
%\input{gravity}
\input{shaw}
%\input{wing}
%______________noise level 10^(-2)______________%
\input{baart2}
%\input{foxgood2}
%\input{gravity2}
%\input{shaw2}
%\input{wing2}
%______________noise level 10^(-1)______________%
%\input{baart1}
%\input{foxgood1}
%\input{gravity1}
%\input{shaw1}
\input{wing1}
 
The accuracy and the run time for the discretized methods depend on the number of
discretization points $n$; Chebfun-based methods do not depend on $n$. 
Thus, in Figures \ref{fig:5.3}, \ref{fig:5.5}, and \ref{fig:5.5}, we show some graphs 
with the relative accuracy on the vertical axis and run time on the horizontal axis; 
being closer to the origin is better. In the figures we trim some of the outliers when 
some values of $n$ give bad accuracy. The figures show that the accuracy and 
computing time of the implementations with Chebfun are competitive.

Finally, we will consider a Fredholm integral equation of the first kind in two
space-dimensions,
\begin{equation}\label{2d}
  \int_{\Omega} \kappa(s_{1},s_{2},t_{1},t_{2}) x(t_{1},t_{2})\,dt_{1}dt_{2}
  = 
  g^{\delta}(s_{1},s_{2}),\qquad (s_{1},s_{2})\in\Omega,
\end{equation}
where $\Omega=[-1,1]\times[-2,2]$. The kernel models Gaussian blur and is given by
\begin{equation*}
\kappa(s_{1},s_{2},t_{1},t_{2})=\kappa_{1}(s_{1},t_{1})\times\kappa_{1}(s_{2},t_{2}), 
\end{equation*}
with  
\begin{equation*}
\kappa_{1}(s_{1},t_{1})=\frac{e^{-\frac{\left(t_1-s_1\right)^2}{2\sigma^2}}}{\sqrt{2\pi\sigma^2}},
\end{equation*}
where $\sigma$ is the standard deviation of the Gaussian distribution.  The
exact solution $x(t_{1},t_{2})$ will be constructed as a continuous
``image''\footnote{\!\!\!With a continuous ``image'' we mean a mapping from
  $[0,1]\times[0,1]$ to $[0,1]$, where the function value represents a gray
  scale value. Thus, a gray scale value exists for all points continuously and
  not just for discrete points on a grid. The mapping itself is not necessarily
  continuous.}  that we will blur and try to reconstruct.  In our example, we
let $\sigma=0.2$ and construct the exact solution as
\begin{equation*}
  x(t_{1},t_{2})=\left\lbrace  (t_{1},t_{2})\in \Omega \,: -0.5<t_1 <0.2 \, \,
    \text{and}\,    -0.6<t_2 <-0.2         	 \right\rbrace,
\end{equation*}
which is shown in Fig.\ \ref{figure2d}(a). The error-free right-hand side
function is determined by
\begin{equation*}
   g(s_{1},s_{2}):=\int_{\Omega} \kappa(s_{1},s_{2},t_{1},t_{2}) \, x(t_{1},t_{2})\,dt_{1}dt_{2}
\end{equation*}
and the error-contaminated function $ g^{\delta}(s_{1},s_{2})$ in \eqref{2d} is
defined by
\begin{equation*} 
  g^{\delta}(s_{1},s_{2})=  g(s_{1},s_{2})+\alpha \frac{\norm{g(s_{1},s_{2})}}{\norm{F(s_1,s_2)}}F(s_1,s_2), 
\end{equation*}
where $F(s_1,s_2)$ is a smooth Chebfun function in two space-dimensions with maximum 
frequency about $2\pi/\vartheta$ and standard normal distribution $N(0,1)$ at each point
and $\alpha$ is the noise level.  In this problem, we let the noise level and
$\vartheta$ equal $10^{-2}$. Both the error-free right-hand side and the
error-contaminated function are shown in Fig. \ref{figure2d}(b) .

We reconstruct the exact image $x(t_1,t_2)$ by applying the Chebfun-based methods to the
problem.  Similarly as for the problems in one space-dimension, the truncation parameter
$\ell$ in \eqref{solexp2d} and the Tikhonov regularization parameter $\lambda$ in
\eqref{solTik2D} are determined with aid of the discrepancy principle, where
we set $\eta $ to be $10$ in our example. The reconstructed images obtained with
the TSVE and Tikhonov regularization with Chebfun are shown in
Fig. \ref{figure2d}(c) and (d), respectively.  The two reconstructed images are
seen to be of roughly the same quality, with the image determined by Tikhonov 
regularization being slightly less oscillatory, and the computing times for both methods
is comparable: the TSVE with Chebfun required $34.64$ seconds, while Tikhonov
regularization with Chebfun took $24.56$ seconds.
	%______________________________2D figures_______________________________________________%
\begin{figure}[t]
	%\begin{center}
	\begin{subfigure}[b]{0.4\textwidth}
		\includegraphics[height=6.5cm, width=6cm]{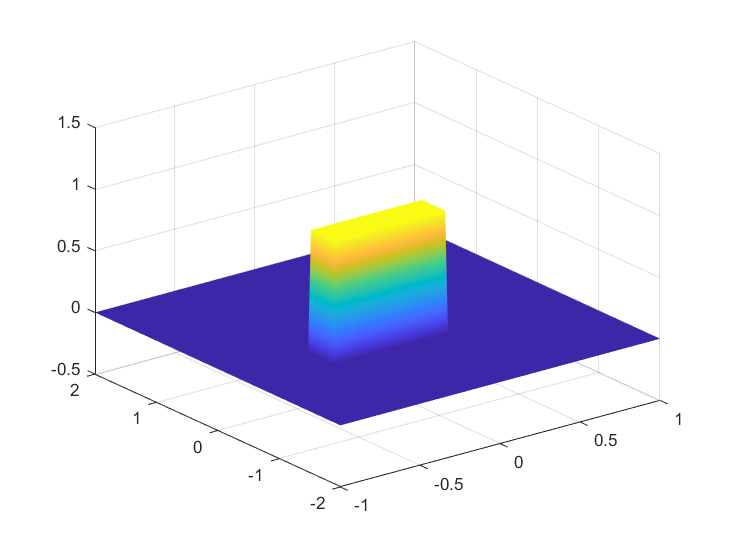}
		 \caption{}
   	\end{subfigure}
  \hfill
  	\begin{subfigure}[b]{0.4\textwidth}
    	\includegraphics[height=6.5cm, width=6cm]{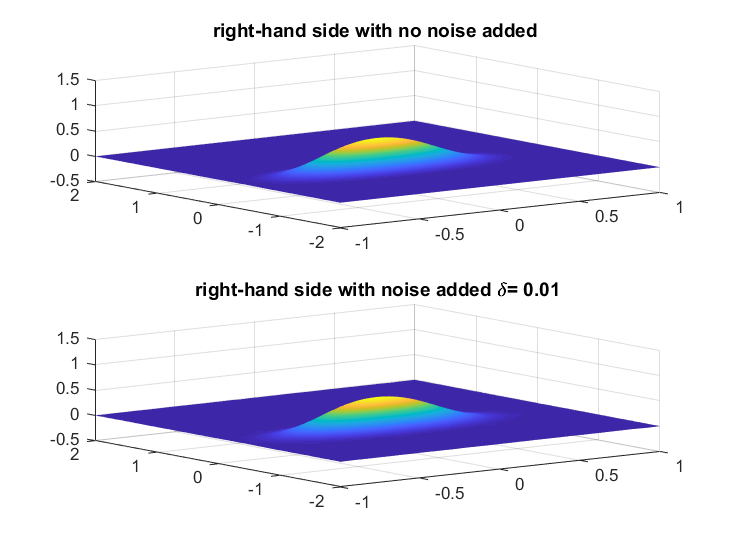}
    	 \caption{}
   	\end{subfigure}
     \hfill
  	\begin{subfigure}[b]{0.4\textwidth}
  		\includegraphics[height=6.5cm, width=6cm]{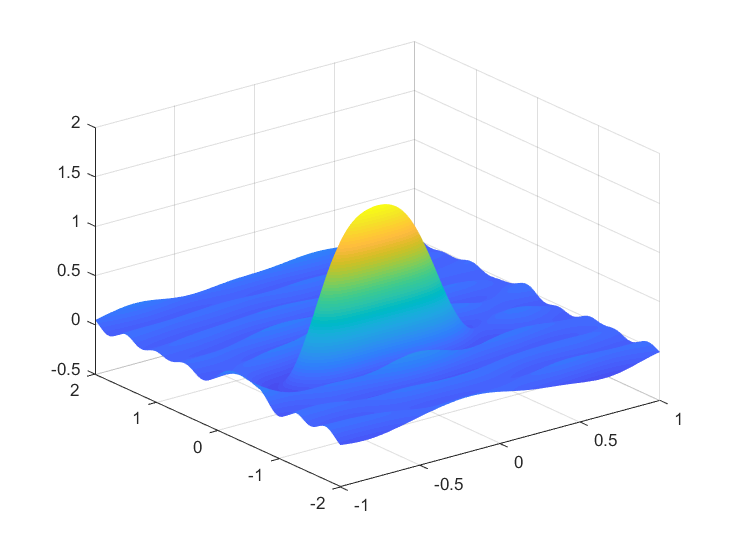}
    	\caption{}
   	\end{subfigure}
   	\hfill
   	  	\begin{subfigure}[b]{0.4\textwidth}
  		\includegraphics[height=6.5cm, width=6cm]{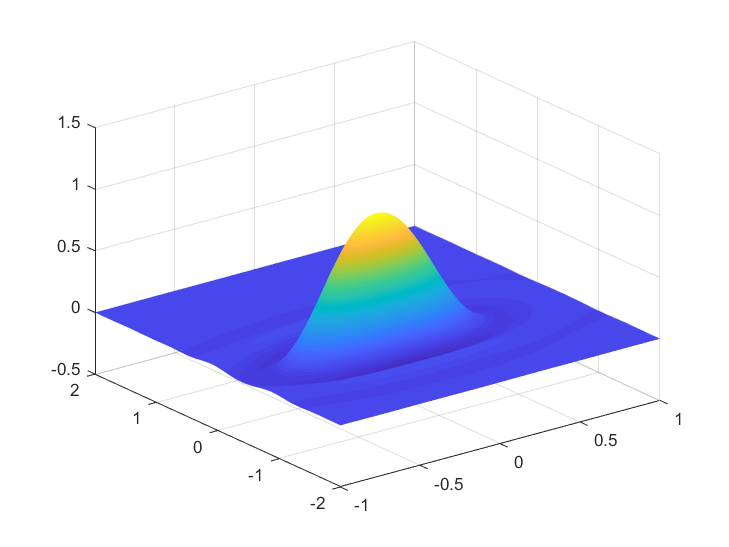}
  		\caption{}
    	\end{subfigure}
    	\hfill
\caption{Gaussian blur example: (a) Exact image, (b) Right hand side, (c)  Reconstructed image by TSVE with Chebfun, (d) Reconstructed image by Tikhonov regularization with Chebfun.}\label{figure2d}
\end{figure}

%%% Local Variables: 
%%% mode: LaTeX
%%% TeX-master: "paper25"
%%% TeX-PDF-mode:t
%%% TeX-engine: luatex
%%% auto-fill-function:nil
%%% mode:auto-fill
%%% flyspell-mode:nil
%%% mode:flyspell
%%% ispell-local-dictionary: "american"
%%% End: 

\section{Conclusion}\label{sec:concl}
The computed results illustrate the feasibility of using Chebfun to solve
linear discrete ill-posed problems and in this way carry out computations in a
fashion that is closer to the spirit of the analysis of ill-posed problems found,
e.g., in \cite{Eng2000}. The accuracy and timings of the implementations with 
Chebfun are competitive.

In the future further extensions to Chebfun including the treatment of functions
of four or six variables will allow the application of the Chebfun-based approach 
discussed in this paper to the solution of linear ill-posed problems in two and
three space-dimensions. It would be interesting to see if the observations made 
here carry over to these classes of problems.

\section*{Acknowledgments}\label{sec:ack}
The authors are grateful for enlightening discussions with Behnam Hashemi
(Shiraz University of Technology) about Chebfun and Chebfun3 in particular. We
hope that this paper can serve as a motivation for the extension of Chebfun to
four and higher dimensional functions.

We also would like to thank Richard Mika\"el Slevinsky (University of Manitoba)
for first pointing out to us the link between adaptive cross approximation and
singular value expansions used in Chebfun2/3.

\bibliographystyle{siam}
\bibliography{longstrings,bib}

\begin{thebibliography}{10}

\bibitem{q467}
{\sc M.~Bebendorf}, {\em Approximation of boundary element matrices},
  Numerische Mathematik, 86 (2000), pp.~565--589.

\bibitem{q699}
{\sc M.~Bebendorf and S.~Rjasanow}, {\em Adaptive low-rank approximation of
  collocation matrices}, Computing, 70 (2003), pp.~1--24.

\bibitem{BRS}
{\sc C.~Brezinski, G.~Rodriguez, and S.~Seatzu}, {\em Error estimates for
  linear systems with applications to regularization}, Numerical Algorithms, 49
  (2008), pp.~85--104.

\bibitem{chebfun}
{\sc T.~A. Driscoll, N.~Hale, and L.~N. Trefethen}, eds., {\em Chebfun Guide},
  Oxford, 2014.

\bibitem{Eng2000}
{\sc H.~W. Engl, M.~Hanke, and A.~Neubauer}, {\em Regularization of Inverse
  Problems}, Kluwer, Dordrecht, 2000.

\bibitem{Hac95}
{\sc W.~Hackbusch}, {\em Integral {E}quations: {T}heory and {N}umerical
  {T}reatment}, International {S}eries on {N}umerical {M}athematics,
  Birkh{\"a}user, 1995.

\bibitem{Han1998}
{\sc P.~C. Hansen}, {\em Rank-definicient and Discrete Ill-Posed Problems},
  SIAM, Philadelphia, 1998.

\bibitem{PCH4}
{\sc P.~C. Hansen}, {\em Regularization tools version 4.0 for {Matlab} 7.3},
  Numerical Algorithms, 46 (2007), pp.~189--194.

\bibitem{HasTre17}
{\sc B.~Hashemi and L.~N. Trefethen}, {\em Chebfun in three dimensions}, SIAM
  Journal on Scientific Computing, 39 (2017), pp.~C341--C363.

\bibitem{Ki}
{\sc S.~Kindermann}, {\em Convergence analysis of minimization-based noise
  level-free parameter choice rules for linear ill-posed problems}, Electronic
  Transactions on Numerical Analysis, 38 (2011), pp.~233--257.

\bibitem{KR}
{\sc S.~Kindermann and K.~Raik}, {\em A simplified {L}-curve method as error
  estimator}, Electronic Transactions on Numerical Analysis, 53 (2020),
  pp.~217--238.

\bibitem{MaRevBVa15}
{\sc T.~Mach, L.~Reichel, M.~{Van Barel}, and R.~Vandebril}, {\em Adaptive
  cross approximation for ill-posed problems}, Journal of Computational and
  Applied Mathematics, 303 (2016), pp.~206--217.

\bibitem{Na77}
{\sc F.~Natterer}, {\em Regularization of ill-posed problems by projection
  methods}, Numerische Mathematik, 28 (1977), pp.~329--341.

\bibitem{RR}
{\sc L.~Reichel and G.~Rodriguez}, {\em Old and new parameter choice rules for
  discrete ill-posed problems}, Numerical Algorithms, 63 (2013), pp.~65--87.

\bibitem{Schmidt1989}
{\sc E.~Schmidt}, {\em Zur Theorie der linearen und nichtlinearen
  Integralgleichungen}, Vieweg+Teubner Verlag, Leipzig/Wiesbaden, 1989,
  pp.~190--233; reprint of an article from 1905.

\bibitem{TowTre13}
{\sc A.~Townsend and L.~N. Trefethen}, {\em An extension of {Chebfun} to two
  dimensions}, SIAM Journal on Scientific Computing, 35 (2013), pp.~C495--C518.

\end{thebibliography}

\end{document}